\documentclass[a4paper,10pt]{amsart}
\usepackage[applemac]{inputenc}
\usepackage{amsxtra,amssymb,amsthm,amsmath,amscd,mathrsfs, epsfig, eufrak}
\usepackage{amscd, amsmath, mathrsfs, amssymb, amsthm, amsxtra, bbding, epsfig, eucal, eufrak, graphicx, latexsym, mathrsfs, mathbbol, bbold}
\usepackage[all]{xy}

\usepackage[margin=1.4in]{geometry}
\addtolength{\oddsidemargin}{-.3in}
\addtolength{\evensidemargin}{-.3in}
\setlength{\textwidth}{6.2in}

\usepackage{tikz}

\usepackage[normalem]{ulem}
\usepackage{soul}
\usepackage{color}

\setstcolor{red}


\makeatletter
\@namedef{subjclassname@2010}{%
  \textup{2010} Mathematics Subject Classification}
\makeatother

\def \R {{\mathbb R}}

\def \Z {{\mathbb Z}}


\def\e{{\rm e}}
\def\ii{{\rm i}}
\def \d {\,{\rm d}}
\def\re{{\Re e\,}}

\newcommand{\ag}{{\mathfrak{a}}}
\newcommand{\bg}{{\mathfrak{b}}}
\newcommand{\cg}{{\mathfrak{c}}}
\newcommand{\Ig}{{\mathfrak I}}





\theoremstyle{plain}
\newtheorem{theorem}{Theorem}
\newtheorem{proposition}{Proposition}[section]
\newtheorem{lemma}[proposition]{Lemma}

\theoremstyle{remark}

\numberwithin{equation}{section}

\addtocounter{footnote}{1}

\begin{document}
\title[Power partitions and saddle-point method]
{Power partitions and saddle-point method*	}
\author{ G\'erald Tenenbaum, Jie Wu and Yali Li}
\thanks{* We include here some corrections with respect to the published version.}

\address{%
Institut Élie Cartan de Lorraine\\
Universit\'e de Lorraine\\\goodbreak
BP 70239\\
54506 Vand\oe uvre-l\`es-Nancy Cedex\\
France
}
\email{gerald.tenenbaum@univ-lorraine.fr}

\address{%
CNRS LAMA 8050
\\
Laboratoire d'Analyse et de Math\'ematiques Appliqu\'ees
\\\goodbreak
Universit\'e Paris-Est Cr\'eteil
\\
94010 Cr\'eteil Cedex
\\
France
}
\email{jie.wu@math.cnrs.fr}
\address{%
Yali Li
\\
School of Mathematics and Statistics
\\
Henan University
\\
Kaifeng, Henan 475004
\\\goodbreak
P. R. China
}

\date{\today}

\begin{abstract}
For $k\geqslant 1$, denote by $p_k(n)$ the number of partitions of an integer $n$ into $k$-th powers.
In this  note, we apply  the saddle-point method to provide a new proof for the well-known asymptotic expansion of $p_k(n)$.
This approach turns out to significantly simplify  those of Wright (1934), Vaughan (2015) and Gafni (2016).
\end{abstract}

\subjclass[2010]{05A17, 11N37, 11P82}
\keywords{Asymptotic estimates, partitions, partitions into powers, saddle-point method}

\maketitle

\section{Introduction}

Let $p(n)$ denote, as usual,  the number of unrestricted partitions of an integer $n$, i.e. the number of solutions to the equation
$$
n = a_1 + a_2 + \cdots + a_d,
$$
where $d\geqslant 1$ and the $a_j$ are positive integers such that $a_1\geqslant a_2\geqslant \cdots \geqslant a_d\geqslant 1$.
In 1918, Hardy and Ramanujan \cite{HardyRamanujan1918} proved the  asymptotic formula
\begin{equation}\label{HR:1}
p(n)
\sim \frac{\exp(\pi\sqrt{2n/3})}{4\sqrt{3}\, n}
\qquad
(n\to\infty).
\end{equation}
by using  modular properties of Jacobi's $\Delta$-function.\par 
More generally, given an integer $k\geqslant 1$, let $p_k(n)$ denote the number of partitions of the integer $n$ into $k$-th powers, i.e. the number of solutions to the equation
$$
n = a_1^k + \cdots + a_d^k 
$$
where, as before, $d\geqslant 1$ and  $a_1\geqslant a_2\geqslant \cdots \geqslant a_d\geqslant 1$.
Thus, $p_1(n) = p(n)$.\par 
In 1918 too, Hardy and Ramanujan \cite{HardyRamanujan1918} stated without proof the asymptotic formula
\begin{equation}\label{HR:2}
p_k(n)\sim \frac{\bg_k\exp\big\{\cg_kn^{1/(k+1)}\big\}}{n^{(3k+1)/(2k+2)}}
\qquad
(n\to\infty),
\end{equation}
where the constants $\mathfrak{b}_k$ and $\mathfrak{c}_k$ are defined by 
\begin{align}
\ag_k
& := \{k^{-1}\zeta(1+k^{-1})\Gamma(1+k^{-1})\}^{k/(k+1)},
\label{def:ak}
\\\noalign{\vskip 1mm}
\bg_k
& := \frac{\ag_k}{(2\pi)^{(k+1)/2}\sqrt{(1+1/k)}} ,
\label{def:bk}
\\\noalign{\vskip 0,5mm}
\mathfrak{c}_k
& := (k+1)\ag_k,
\label{def:ck}
\end{align}
and $\zeta$ is the Riemann zeta-function.
In 1934,  introducing a number of complicated objects including generalised Bessel functions, Wright \cite{Wright1934} obtained an asymptotic expansion of $p_k(n)$: 
for any integer $k\geqslant 1$, there is a real sequence $\{\alpha_{kj}\}_{j\geqslant 1}$ such that, for any $J\geqslant 1$, we have
\begin{equation}\label{Wright:Asymp}
p_k(n)
=\frac{ \mathfrak{b}_k \exp(\mathfrak{c}_k(n+h_k)^{1/(k+1)})}{(n+h_k)^{(3k+1)/(2k+2)}}
\bigg\{1 + \sum_{1\leqslant j<J} \frac{(-1)^j\alpha_{kj}}{(n+h_k)^{j/(k+1)}} + O\bigg(\frac{1}{n^{J/(k+1)}}\bigg)\bigg\},
\end{equation}
where
$$
h_k := \begin{cases}
0 & \text{if $k$ is even,}
\\
(-1)^{(k+1)/2} (2\pi)^{-(k+1)} k! \zeta(k+1) & \text{if $k$ is odd,}
\end{cases}
$$
and the implied constant depends at most on $J$ and $k$. Apart from an explicit formula for $\alpha_{k1}$, no further information was given about the $\alpha_{kj}$ beyond the statement that they depend only on $k$ and $j$ and that they ``may be calculated with sufficient labour for any given values of $k$,~$j$''.\par 
Of course, taking $J=1$ in \eqref{Wright:Asymp} yields an effective form of  \eqref{HR:2}.\par 
More recently, appealing to a relatively simple implementation of the Hardy-Littlewood circle method,  Vaughan \cite{Vaughan2015} obtained an explicit version of  \eqref{Wright:Asymp} in the case $k=2$ and Gafni~\cite{Gafni2016} generalised the argument to arbitrary, fixed $k$.
\par 
Gafni states her result in the following way.
Let $X=X_k(n)$ denote the real solution to the equation
\begin{equation}\label{def:X}
n = (\ag_kX)^{1+1/k}- \tfrac{1}{2}X - \tfrac{1}{2}\zeta(-k),
\end{equation}
and write
\begin{equation}\label{def:Y}
Y=Y_k(n) := (1+1/k) \ag_k^{1+1/k} X^{1/k} - \tfrac{1}{4}\cdot
\end{equation}
Then, given any $k\geqslant 1$, there is a real sequence $\{\beta_{kj}\}_{j\geqslant 1}$ such that for any fixed $J\geqslant 1$, we have
\begin{equation}\label{Gafni:Asymp}
p_k(n)
= \frac{\exp\big\{(k+1)\ag_k^{1+1/k}X^{1/k}-\tfrac{1}{2}\big\}}{(2\pi)^{(k+1)/2}X^{3/2}Y^{1/2}}
\bigg\{1 + \sum_{1\leqslant j<J} \frac{\beta_{kj}}{Y^j} + O\bigg(\frac{1}{Y^J}\bigg)\bigg\}.
\end{equation}
It may be checked that the asymptotic formulae \eqref{Wright:Asymp} and \eqref{Gafni:Asymp} match each other.
\par 
\medskip
In this  note, our aim is to provide a new proof of \eqref{HR:2}, and indeed also of \eqref{Wright:Asymp} and \eqref{Gafni:Asymp},  by applying the saddle-point method along lines very similar to those employed in \cite{Tenenbaum} in the case of $p(n)$.\par 
Our approach appears to be significantly simpler than those of the quoted previous works. \par 
Indeed, as mentioned above,  Wright's method, which provides a sharper error term, rests on the introduction of generalised Bessel functions and so may be regarded as far more difficult. Note that Vaughan \cite{Vaughan2015} does motivate his study by the search of a `relatively simple argument'.\par 
Our method simplifies the matter further. Actually,  the saddle-point method may be seen as a crude version of the circle method in which the major arcs are reduced to a single neighbourhood of one point. In the present context, it actually provides the same accuracy.\par 
We thank the referee for pointing out to us that the Vaughan-Gafni method has been very recently generalised to enumerate other classes of partitions---see \cite{BMZ18} and \cite{DR18}. We believe that the saddle-point method could still be used in these contexts, with expected substantial simplifications in the analysis.
\par \medskip
The constants $\mathfrak{b}_k$ and $\mathfrak{c}_k$ being defined as in \eqref{def:bk} and \eqref{def:ck}, we can state the following.
\begin{theorem}\label{thm1}
Let $k\geqslant 1$ be a fixed integer. There is a real sequence $\{\gamma_{kj}\}_{j\geqslant 1}$ such that, for any given integer $J\geqslant 1$, we have
\begin{equation}\label{asymp:pkn}
p_k(n) 
= \frac{\bg_k \exp(\mathfrak{c}_kn^{1/(k+1)})}{n^{(3k+1)/(2k+2)}}
\bigg\{1 + \sum_{1\leqslant j<J} \frac{\gamma_{kj}}{n^{j/(k+1)}} + O\bigg(\frac{1}{n^{J/(k+1)}}\bigg)\bigg\}
\end{equation}
uniformly for $n\geqslant 1$.
The implied constant depends at most on $J$ and $k$.
\end{theorem}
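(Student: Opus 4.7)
The plan is to apply the saddle-point method to Cauchy's integral formula for the coefficients of the generating function
$$F_k(z) = \sum_{n \gst 0} p_k(n) z^n = \prod_{m \gst 1}\frac{1}{1 - z^{m^k}}\cdot$$
Setting $z = \e^{-\sigma + \ii t}$ and writing $\Phi(\sigma) := \log F_k(\e^{-\sigma})$, we have
$$p_k(n) = \frac{1}{2\pi}\int_{-\pi}^{\pi}\exp\{\Phi(\sigma - \ii t) + n(\sigma - \ii t)\}\,\d t,$$
and we select $\sigma = \sigma_0(n)$ as the positive real solution of $\Phi'(\sigma_0) + n = 0$, then localize near $t = 0$.

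The first key input is an asymptotic expansion of $\Phi(\sigma)$ as $\sigma \to 0^+$, obtained by Mellin inversion. Writing
$$\Phi(\sigma) = \sum_{m, \nu \gst 1}\frac{\e^{-\nu m^k \sigma}}{\nu} = \frac{1}{2\pi \ii}\int_{(c)}\Gamma(s)\zeta(s+1)\zeta(ks)\sigma^{-s}\,\d s \qquad (c > 1/k),$$
and shifting the contour past the simple pole at $s = 1/k$ (dominant term $\Gamma(1 + 1/k)\zeta(1 + 1/k)\sigma^{-1/k}$), the double pole at $s = 0$ (yielding $\tfrac12\log\sigma - \tfrac{k}{2}\log(2\pi)$), and the poles at negative integers $s = -j$ (where the trivial zeros of $\zeta$ kill most terms, only $j = 1$ surviving when $k$ is odd through $\zeta(-k)$), I obtain a complete expansion
$$\Phi(\sigma) = \Gamma(1+1/k)\zeta(1+1/k)\sigma^{-1/k} + \tfrac12\log\sigma - \tfrac{k}{2}\log(2\pi) + \sum_{1\lst j < J}\lambda_{kj}\sigma^j + O(\sigma^J).$$
Inverting $\Phi'(\sigma_0) = -n$ by successive approximations then yields $\sigma_0 \sim \ag_k\, n^{-k/(k+1)}$ together with a full expansion of $\sigma_0$ in powers of $n^{-1/(k+1)}$, and a direct computation gives $\Phi(\sigma_0) + n\sigma_0 \sim \cg_k n^{1/(k+1)}$, matching the exponent in \eqref{asymp:pkn}.

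On a central arc $|t| \lst t_0$, with $t_0$ a suitable small power of $\sigma_0$ satisfying $t_0 \gg 1/\sqrt{\Phi''(\sigma_0)}$, I Taylor-expand the exponent in $t$: the linear term cancels by the saddle-point equation, the quadratic $-\tfrac12\Phi''(\sigma_0) t^2$ produces a Gaussian of variance $1/\Phi''(\sigma_0) \asymp n^{-(2k+1)/(k+1)}$, and a systematic expansion of the exponential of the higher-order corrections followed by termwise Gaussian moment calculations generates the series $\sum_j \gamma_{kj}/n^{j/(k+1)}$. The prefactor $\bg_k\, n^{-(3k+1)/(2k+2)}$ emerges from combining $\sqrt{\sigma_0}$ (from the $\tfrac12\log\sigma$ term in $\Phi$), the constant $-\tfrac{k}{2}\log(2\pi)$, and the Gaussian normalization $\sqrt{2\pi/\Phi''(\sigma_0)}$.

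The principal obstacle is to show that the peripheral contribution $t_0 < |t| \lst \pi$ is negligible. From
$$\re\{\Phi(\sigma_0 - \ii t) - \Phi(\sigma_0)\} = -\sum_{m, \nu \gst 1}\frac{1 - \cos(\nu m^k t)}{\nu}\,\e^{-\nu m^k \sigma_0},$$
one needs a uniform upper bound on this quantity that dominates any polynomial power of $n$. In the case $k = 1$ treated in \cite{Tenenbaum}, the modular transformation of the Dedekind $\eta$-function supplies such a bound at once; here I will instead argue that for $|t|$ in the peripheral range a positive proportion of the $k$-th powers $m^k$ with $\nu m^k \sigma_0 \asymp 1$ produces angles $\nu m^k t$ bounded away from multiples of $2\pi$, via an equidistribution or Weyl-type input adapted to the sparsity of the sequence $\{m^k\}$. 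Once this peripheral bound is secured, gluing the central and peripheral estimates and re-expressing the result in powers of $n^{-1/(k+1)}$ delivers \eqref{asymp:pkn}.
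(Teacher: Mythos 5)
Your route is the same as the paper's: the Mellin-inversion expansion of $\Phi$ (poles at $z=1/k$, the double pole at $z=0$ giving $\tfrac12\log\{\sigma/(2\pi)^k\}$, and the single surviving pole at $z=-1$ through $\zeta(-k)$), the inversion of the saddle-point equation in powers of $n^{-1/(k+1)}$, and the Gaussian-moment expansion on the central arc reproduce Lemmas \ref{lem2.1}--\ref{lem2.2} and Proposition \ref{prop3.1} almost verbatim, and that part of your plan is sound. The genuine gap is the peripheral estimate, which you explicitly leave as an intention (``once this peripheral bound is secured''), and whose one-line heuristic is not correct uniformly. First, just outside the central arc, say $\sigma_n^{1+1/(3k)}<|t|\leqslant 2\pi\sigma_n$, \emph{all} the relevant angles $m^kt$ with $m^k\sigma_n\asymp1$ tend to $0$, so no positive proportion of them is ``bounded away from multiples of $2\pi$''; there you must instead exploit $1-\cos\theta\gg\theta^2$ to get a bound of the type $\exp\{-c\,t^2\sigma_n^{-(2+1/k)}\}$, which is what the first case of Lemma \ref{lem2.4} does.

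Second, in the genuinely minor-arc range a bare appeal to ``equidistribution or a Weyl-type input'' does not suffice, because for $k\geqslant2$ the relevant $m$ are confined to the short block $I=](4\sigma_n)^{-1/k},(2\sigma_n)^{-1/k}]$ (where $\e^{-m^k\sigma_n}\asymp1$), and when $\tau/(2\pi)$ is well approximated by a rational $a/q$ with \emph{small} denominator there is no equidistribution of $m^k\tau/(2\pi)$ at all -- Weyl's inequality gives nothing there. The paper's Lemma \ref{lem2.4} handles this by a three-way split: an elementary counting argument for $2\pi\sigma_n<|\tau|\leqslant2\pi\sigma_n^{1-1/(3k)}$ (counting the $m\in I$ with $\|m^k\tau/(2\pi)\|\in[\tfrac14,\tfrac12]$ near each admissible integer $h$), and, for larger $|\tau|$, Dirichlet's approximation theorem followed by Weyl's inequality when $q$ is large and by major-arc exponential-sum estimates (Vaughan, Lemma 2.7 and Theorem 4.2 of \cite{Va97}) when $q$ is small, so as to get $\big|\sum_{m\in I}\e^{\ii\tau m^k}\big|\leqslant(1-c)|I|$ and then, via Cauchy--Schwarz, $\sum_{m\in I}\|\tau m^k/(2\pi)\|^2\gg\sigma_n^{-1/k}$. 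Since this is the only arithmetic input of the whole proof, your argument is incomplete until you carry out (or cite) such a case analysis; everything else in your proposal matches the paper and goes through.
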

The coefficients $\gamma_{kj}$ can be made explicit directly from the computations in our proof. For instance, we find that $\gamma_{k1}=-(11k^2+11k+2)/(24k\cg_k)$ when $k\geqslant 2$, in accordance with the expression given by Wright. (It can be checked, after some computations, that it matches Gafni's formula too.) We also have $$\gamma_{11}=-\tfrac1{48}\cg_1-1/\cg_1=-\sqrt{\frac23}\Big(\frac\pi{48}+\frac3{2\pi}\Big).$$\par 
It may be seen that $|\gamma_{kj}|$ grows like $\Gamma(j)\e^{O(j)}$ and thus that the series $\sum_{j\geqslant 1}\gamma_{kj}z^j$ has radius of convergence 0.

\section{Technical preparation}
\vskip-2mm
Define
\begin{equation}\label{def:F-Phi}
F_k(s) := \sum_{n\geqslant 0} p_k(n) \e^{-ns}\qquad (\re s>0), 
\end{equation}
so that
\begin{equation}
\label{pkn}
p_k(n)=\frac1{2\pi\ii}\int_{\sigma-\ii\pi}^{\sigma+\ii\pi}F_k(s)\e^{ns}\d s=\frac1{2\pi}\int_{-\pi}^\pi F_n(\sigma+\ii\tau)\e^{n\sigma+\ii n\tau}\d\tau.
\end{equation}
According to the principles of the saddle-point method, we aim at selecting the integration abscissa $\sigma$ as a solution $\sigma_n$ of $-F_k'(\sigma)/F_k(\sigma)=n$. 
We plainly have
\begin{equation}\label{Exp-Product:Fk(s)}
F_k(s) = \prod_{m\geqslant 1}\big(1-\e^{-m^ks}\big)^{-1}\qquad (\re s>0).
\end{equation}
 Thus, in the same half-plane, we may define a determination of $\log F_k(s)$ by the formula
 $$\Phi_k(s):=\sum_{m\geqslant 1}\log \Big(\frac1{1-\e^{-m^ks}}\Big)$$
where the  complex logarithms are taken in principal branch. Expanding throughout and inverting summations, we get 
\begin{equation}\label{Series:Phi_k(s)}
\Phi_k(s) = \sum_{m\geqslant 1}\sum_{n\geqslant 1}\frac{\e^{-m^kns}}{n}
= \sum_{r\geqslant 1} \frac{w_k(r)}{r} \e^{-rs},\quad -\Phi_k'(s)=\sum_{r\geqslant 1}w_k(r)\e^{-rs} \quad (\re s>0),
\end{equation}
where 
\vskip-6mm
$$ w_k(r):=\sum_{m^k\,\mid\, r}m^k\qquad (r\geqslant 1).
$$
Hence $-\Phi_k'(\sigma)$ decreases from $+\infty$ to $0+$ on $(0, \infty)$, and so the equation $- \Phi_k'(\sigma) = n$ has  for each integer $n\geqslant 1$ a unique real solution  $\sigma_n=\sigma_n(k)$.
Moreover, the sequence $\{\sigma_n\}_{n\geqslant 1}$ is decreasing and the trivial estimates $1\leqslant w_k(r)\leqslant r^2$ yield $1/n\ll\sigma_n\ll 1/\sqrt[3]n$.
\par
We start with an asymptotic expansion for the derivatives $\Phi_k^{(m)}(\sigma_n)$ in terms of powers of~$\sigma_n$. It turns out that all coefficients but a finite number vanish.\par 
\begin{lemma}\label{lem2.1}
Let $J\geqslant 1$, $k\geqslant 1$.
As $n\to\infty$, we have
\begin{equation}\label{eq:lem2.1_0}
\Phi_k(\sigma_n)
= \frac{k\ag_k^{1+1/k}}{ \sigma_n^{1/k}}
+ \tfrac12\log\Big(\frac{\sigma_n}{(2\pi)^k}\Big)
+\tfrac12\zeta(-k)\sigma_n
+ O\big(\sigma_n^{J}\big),
\end{equation}
Moreover, for fixed $m\geqslant 1$,
\begin{equation}\label{eq:lem2.1_1}
(-1)^m \Phi_k^{(m)}(\sigma_n)
 = \frac{\ag_k^{1+1/k}}{ \sigma_n^{m+1/k}}\prod_{1\leqslant \ell<m} \Big(\ell+\frac1k\Big)
- \frac{(m-1)!}{2\sigma_n^{m}}
-\tfrac12\delta_{1m}\zeta(-k)
+ O(\sigma_n^{J}),
\end{equation}
where $\delta_{1m}$ is Kronecker's symbol. 
\end{lemma}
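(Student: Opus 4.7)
The natural tool is Mellin inversion. Starting from the Cahen--Mellin identity $\e^{-x}=(2\pi\ii)^{-1}\int_{(c)}\Gamma(z)x^{-z}\d z$ valid for $c>0$, $x>0$, and the series $\Phi_k(s)=\sum_{r\ge 1}w_k(r)\e^{-rs}/r$ of \eqref{Series:Phi_k(s)}, I would interchange sum and integral on a vertical line far enough to the right (for instance $\re z=2$) to obtain
$$
\Phi_k(s)=\frac{1}{2\pi\ii}\int_{(2)}\Gamma(z)\zeta(z+1)\zeta(kz)s^{-z}\d z,
$$
using the Dirichlet series identity $\sum_{r\ge 1}w_k(r)r^{-(z+1)}=\zeta(z+1)\zeta(kz)$, which follows at once from writing $r=m^kj$.

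The heart of the argument is to shift the line of integration to $\re z=-J-\eta$ for some small $\eta>0$ and collect the residues of $\Gamma(z)\zeta(z+1)\zeta(kz)s^{-z}$ in the vertical strip. There are three genuine contributions: the simple pole of $\zeta(kz)$ at $z=1/k$ yields $\Gamma(1/k)\zeta(1+1/k)s^{-1/k}/k=k\ag_k^{1+1/k}s^{-1/k}$ (here I would use the identity $k\Gamma(1+1/k)=\Gamma(1/k)$ together with \eqref{def:ak} to recover exactly the main term); the double pole at $z=0$ coming from $\Gamma(z)\zeta(z+1)$ combined with $\zeta(0)=-\tfrac12$ and $\zeta'(0)=-\tfrac12\log(2\pi)$ produces $\tfrac12\log(s/(2\pi)^k)$; and the simple pole of $\Gamma(z)$ at $z=-1$, where $\zeta(0)=-\tfrac12$, gives $\tfrac12\zeta(-k)s$.

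The crucial observation making the expansion collapse to a closed form is that every residue at $z=-j$ with $j\ge 2$ vanishes, because such a residue is a multiple of $\zeta(1-j)\zeta(-kj)$: if $j$ is even then $kj$ is a positive even integer and $\zeta(-kj)=0$, while if $j\ge 3$ is odd then $j-1$ is a positive even integer and $\zeta(1-j)=0$. This explains the clean three-term expansion \eqref{eq:lem2.1_0}. What remains is to bound the shifted integral by $O(s^{J+\eta})$; this follows from Stirling's formula, which yields exponential decay of $|\Gamma(\sigma+\ii\tau)|$ in $|\tau|$, against at most polynomial growth of $\zeta$ on vertical lines. Specialising to $s=\sigma_n$ (real positive, tending to $0$) yields \eqref{eq:lem2.1_0}.

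For the derivative statement \eqref{eq:lem2.1_1}, I would differentiate the Mellin representation under the integral sign, which amounts to replacing $s^{-z}$ by $(-1)^mz(z+1)\cdots(z+m-1)s^{-z-m}$, and then re-run the contour shift (now with threshold $J+m$ to keep the error at $O(\sigma_n^J)$). The same pole structure appears, with the same vanishing mechanism for $j\ge 2$, and the binomial-type product $\prod_{1\le\ell<m}(\ell+1/k)$ simply comes from differentiating $s^{-1/k}$ a total of $m$ times; the $-(m-1)!/(2s^m)$ term arises from differentiating $\tfrac12\log s$; the Kronecker symbol reflects that the linear term in $s$ survives differentiation only once.

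The main technical obstacle is the uniform control of the remainder integral on the shifted line, i.e.\ verifying that the combined bounds on $\Gamma(z+m)$, $\zeta(z+1)$ and $\zeta(kz)$ on $\re z=-J-m-\eta$ make the integral absolutely convergent with the correct power of $\sigma_n$. This is standard once Stirling and the convexity bound (or functional equation) for $\zeta$ are invoked, but it is the step that requires the most care; everything else reduces to a residue calculation exploiting the trivial zeros of $\zeta$.
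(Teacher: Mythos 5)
Your proposal follows essentially the same route as the paper: Mellin inversion of $\e^{-rs}$ against the identity $\sum_{r\geqslant1}w_k(r)r^{-1-z}=\zeta(z+1)\zeta(kz)$, a contour shift past $\re z=-J-m$ justified by Stirling and the polynomial growth of $\zeta$ in vertical strips, with the three residues at $z=1/k$, $z=0$, $z=-1$ giving the stated terms and the trivial zeros of $\zeta(z+1)\zeta(kz)$ cancelling the remaining poles of the Gamma factor. Your explicit parity argument for the vanishing at $z=-j$, $j\geqslant2$, and your treatment of the derivatives via $\Gamma(z+m)$ match the paper's proof, so the argument is correct and not genuinely different.
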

\goodbreak
\begin{proof}
Considering Mellin's inversion formula
$$
\e^{-s} = \frac{1}{2\pi\text{i}} \int_{2-\text{i}\infty}^{2+\text{i}\infty} \Gamma(z) s^{-z} \d z
\qquad
(\re s>0)
$$
and the convolution identity
\begin{equation}\label{def:mathfrakg}
\sum_{r\geqslant 1}\frac{w_k(r)}{r^{1+z}}
= \zeta(z+1) \zeta(kz)
\qquad
(\re z>1/k),
\end{equation}
we derive from the series representation \eqref{Series:Phi_k(s)} the integral formula
\begin{equation}\label{Integral:Phi_k(s)}
\Phi_k(s) 
= \frac{1}{2\pi\text{i}} \int_{2-\text{i}\infty}^{2+\text{i}\infty}
\zeta(z+1) \zeta(kz) \Gamma(z) \frac{\d z}{ s^z},
\end{equation}
and in turn
\begin{equation}
\label{intfder}
(-1)^m\Phi_k^{(m)}(\sigma_n)
= \frac1{2\pi\text{i}} \int_{2-\text{i}\infty}^{2+\text{i}\infty}
\zeta(z+1) \zeta(kz) \Gamma(z+m)  \frac{\d z}{\sigma_n^{z+m}}\qquad (m\geqslant 0).
\end{equation}
\par 
Using the classical fact that $\zeta(z)$ has finite order in any vertical strip $a\leqslant \re z\leqslant b$ ($a, b\in \R$ with $a<b$), or, in other words, satisfies
$$
\zeta(x+\text{i}y)\ll_{a, b} 1+|y|^{A}
\qquad
(a\leqslant x\leqslant b, \, |y|\geqslant 1),
$$
for suitable $A=A(a,b)$, and invoking Stirling's formula in the form
$$
|\Gamma(x+\text{i}y)|
= \sqrt{2\pi} |y|^{x-1/2} \e^{-\pi |y|/2} \big\{1 + O_{a, b}\big(1/y\big)\big\}
\qquad
(a\leqslant x\leqslant b, \, |y|\geqslant 1)
$$
we may move the line of integration to $\re z=-J-m-\tfrac12$.\par 
 The  shifted integral is clearly $\ll\sigma_n^{J}$.\par 
Let us first consider the case $m=0$ in \eqref{intfder}. Then the crossed singularities are a pole of order~2 at $z=0$, and two simple poles at $z=1/k$ and  $z=-1$. Indeed, $\zeta(z+1)\zeta(kz)$ vanishes at all negative integers $\leqslant -2$, so the corresponding zeros compensate the poles of $\Gamma(z)$ at negative  integers $\leqslant -2$. \par 
The residue at $z=1/k$ is equal to
$$
k^{-1} \zeta(1+k^{-1}) \Gamma(k^{-1}) \sigma_n^{-1/k}
= k\ag_k^{1+1/k} \sigma_n^{-1/k}.
$$
The residue at $z=0$ is the coefficient of $z$ in the Taylor expansion of
\begin{align*}
z^2\zeta(z+1)\zeta(kz)\Gamma(z)\sigma_n^{-z}
& = z\zeta(z+1)\zeta(kz)\Gamma(z+1)\sigma_n^{-z}
\\
& = (1-\gamma z) \{\zeta(0)+k\zeta'(0)z\} (1+\gamma z) (1-z\log \sigma_n) + O(z^2)
\\
& = \zeta(0) + \{- \zeta(0)\log\sigma_n+k\zeta'(0)\}z + O(z^2).
\end{align*}
Since $\zeta(0)=-\frac{1}{2}$ and $\zeta'(0)=-\frac{1}{2}\log(2\pi)$,
this residue equals  $\tfrac12\log\{\sigma_n/(2\pi)^k\}$.\par 
The residue at $z=-1$ equals $\tfrac12\zeta(-k)\sigma_n$.
\par\smallskip 
This completes the proof of \eqref{eq:lem2.1_0}.
\par 
When $m= 1$, the three crossed singularities are simple poles. 
The residues at $z=1/k$, $z=0$ and $z=-1$ are respectively $(1/k)\Gamma(1+1/k)\zeta(1+1/k)\sigma_n^{-1-1/k}$, $-1/2\sigma_n$ and $-\tfrac12\zeta(-k)$.
\par \smallskip
When $m\geqslant 2$, the only  crossed singularities are two simple poles,  at \mbox{$z=1/k$} and $z=0$, with respective residues
$(1/k)\Gamma(m+1/k)\zeta(1+1/k)\sigma_n^{-m-1/k}$ and $-\tfrac12(m-1)!\sigma_n^{-m}$.
This proves \eqref{eq:lem2.1_1}.
\end{proof}

\begin{lemma}\label{lem2.2}
Let $J\geqslant 1$, $k\geqslant 1$, $m\geqslant 1$.
\par
{\rm (i)}
There is a real sequence $\{a_{kj}\}_{j\geqslant 1}$ with $a_{k1} = - k/(2\cg_k)$, $a_{k2}=k/(8\cg_k^2)$, such that 
\begin{equation}\label{eq:lem2.2_A}
\sigma_n 
= \frac{\ag_k}{n^{k/(k+1)}} 
\bigg\{1 + \sum_{1\leqslant j<J} \frac{a_{kj}}{n^{j/(k+1)}}
+ O\Big(\frac{1}{n^{J/(k+1)}}\Big)\bigg\}\qquad (n\to\infty).
\end{equation}
\par
{\rm (ii)}
There is a real sequence $\{b_{kj}\}_{j\geqslant 1}$ with $b_{k1} = - a_{k1}/k$ such that, as $n\to\infty$, we have 
\begin{equation}\label{eq:lem2.2_B}
\Phi_k(\sigma_n) 
= k \ag_k n^{1/(k+1)}
\bigg\{1 + \sum_{1\leqslant j<J} \frac{b_{kj}}{n^{j/(k+1)}}
+ O\Big(\frac{1}{n^{J/(k+1)}}\Big)\bigg\}
+ \tfrac12\log\Big(\frac{\sigma_n}{(2\pi)^k}\Big).
\end{equation}
\par
{\rm (iii)}
There is a real sequence $\{b_{kmj}\}_{j\geqslant 1}$ such that, as $n\to\infty$, we have
\begin{equation}
\label{eq:lem2.2_C}
\begin{aligned}
(-1)^m& \Phi_k^{(m)}(\sigma_n)+\tfrac12\delta_{1m}\zeta(-k)\\ 
&= \frac{n^{(mk+1)/(k+1)}}{\ag_k^{m-1}} \prod_{1\leqslant \ell<m} \Big(\ell+\frac1k\Big)
\bigg\{1 + \sum_{1\leqslant j<J} \frac{b_{kmj}}{n^{j/(k+1)}}
+ O\Big(\frac{1}{n^{J/(k+1)}}\Big)\bigg\}.\\
\end{aligned}\end{equation}
\end{lemma}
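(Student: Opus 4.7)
The three statements can be derived from Lemma~\ref{lem2.1} by a single bootstrap argument followed by routine substitution, the real work being concentrated in part~(i).

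\emph{Part (i).} Writing Lemma~\ref{lem2.1} with $m=1$ gives the implicit equation
$$
n=-\Phi_k'(\sigma_n)=\frac{\ag_k^{1+1/k}}{\sigma_n^{1+1/k}}-\frac{1}{2\sigma_n}-\tfrac12\zeta(-k)+O(\sigma_n^{J}).
$$
Multiplying by $\sigma_n^{1+1/k}$ we obtain
$$
n\sigma_n^{1+1/k}=\ag_k^{1+1/k}-\tfrac12\sigma_n^{1/k}-\tfrac12\zeta(-k)\,\sigma_n^{1+1/k}+O(\sigma_n^{J+1+1/k}),
$$
which already yields the leading behaviour $\sigma_n\sim\ag_k/n^{k/(k+1)}$, and in particular $\sigma_n\asymp n^{-k/(k+1)}$ (as is consistent with the crude bound $\sigma_n\ll n^{-1/3}$ stated above). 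I then posit the ansatz $\sigma_n=(\ag_k/n^{k/(k+1)})\bigl\{1+\sum_{1\le j<J}a_{kj}/n^{j/(k+1)}+O(n^{-J/(k+1)})\bigr\}$ and plug it into the previous display: expanding $(1+\cdot)^{1+1/k}$ and $(1+\cdot)^{1/k}$ by the generalized binomial series, each side becomes a polynomial in $1/n^{1/(k+1)}$ modulo $O(n^{-J/(k+1)})$, and identifying coefficients determines the $a_{kj}$ recursively. The first correction comes from balancing $(1+1/k)a_{k1}/x$ (from the LHS, with $x=n^{1/(k+1)}$) against $-1/(2\ag_k x)$ (from the $-\sigma_n^{1/k}/2$ term), yielding $a_{k1}=-k/\{2(k+1)\ag_k\}=-k/(2\cg_k)$; iterating gives $a_{k2}=k/(8\cg_k^{\,2})$, and so on.

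\emph{Parts (ii) and (iii).} These are direct substitutions of the expansion \eqref{eq:lem2.2_A} into Lemma~\ref{lem2.1}. For~(ii), the dominant term of $\Phi_k(\sigma_n)$ is
$$
\frac{k\ag_k^{1+1/k}}{\sigma_n^{1/k}}=k\ag_k\,n^{1/(k+1)}\Bigl\{1+\sum_{1\le j<J}\frac{a_{kj}}{n^{j/(k+1)}}+O(n^{-J/(k+1)})\Bigr\}^{-1/k},
$$
whose binomial expansion produces the claimed series; the coefficient of $n^{1/(k+1)-1/(k+1)}=1$ gives $b_{k1}=-a_{k1}/k$. The remaining contribution $\tfrac12\zeta(-k)\sigma_n$ is $O(n^{-k/(k+1)})$ and is absorbed into the sum by shifting the coefficients $b_{kj}$ for $j\ge k$, while $\tfrac12\log\{\sigma_n/(2\pi)^k\}$ is kept outside because it generates a $\log n$ contribution incompatible with a power series. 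For~(iii), the same substitution applied to \eqref{eq:lem2.1_1} yields
$$
\frac{\ag_k^{1+1/k}}{\sigma_n^{m+1/k}}\prod_{1\le\ell<m}\Bigl(\ell+\frac1k\Bigr)=\frac{n^{(mk+1)/(k+1)}}{\ag_k^{\,m-1}}\prod_{1\le\ell<m}\Bigl(\ell+\frac1k\Bigr)\bigl\{1+O(n^{-1/(k+1)})\bigr\},
$$
while the lower-order piece $-(m-1)!/(2\sigma_n^m)=O(n^{mk/(k+1)})$ is smaller than the main term by a factor $n^{1/(k+1)}$ and hence becomes part of the series $\{b_{kmj}\}$; the Kronecker term $\tfrac12\delta_{1m}\zeta(-k)$ is moved to the left-hand side exactly as in the statement.

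\emph{Expected obstacle.} The only nontrivial point is the bootstrap in~(i): one must verify that the formal expansion in powers of $n^{-1/(k+1)}$ obtained by iteration is genuinely asymptotic up to order $O(n^{-J/(k+1)})$, and in particular that the remainder $O(\sigma_n^{J+1+1/k})$ in the multiplied equation, after inversion, still produces an error $O(n^{-J/(k+1)})$ in $\sigma_n/(\ag_k n^{-k/(k+1)})$. This is a routine but slightly fiddly bookkeeping of exponents, performed by increasing $J$ as needed at the outset. Once~(i) is established, parts~(ii) and~(iii) reduce to the algebra of binomial expansions and are therefore mechanical.
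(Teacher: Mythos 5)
Your proposal is correct and follows essentially the same route as the paper: both start from the $m=1$ case of Lemma~\ref{lem2.1}, which gives $n=\ag_k^{1+1/k}\sigma_n^{-1-1/k}-\tfrac12\sigma_n^{-1}-\tfrac12\zeta(-k)+O(\sigma_n^J)$, invert this asymptotically to obtain \eqref{eq:lem2.2_A}, and then substitute back into \eqref{eq:lem2.1_0} and \eqref{eq:lem2.1_1} to get (ii) and (iii). The only difference is cosmetic: the paper justifies the inversion by Lagrange's inversion formula (with a Rouch\'e-type contour formula for the $a_{kj}$) while you use a bootstrap with undetermined coefficients, which is an equally valid way to establish the expansion you then match coefficients in.
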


\begin{proof}
We infer from \eqref{eq:lem2.1_1} that
\begin{equation}
\label{appsign}
n
= \frac{\ag_k^{1+1/k}}{\sigma_n^{1+1/k}}
- \frac{1}{2\sigma_n}
-\tfrac1{2}\zeta(-k)+ O(\sigma_n^J).
\end{equation}
This immediately implies \eqref{eq:lem2.2_A} by Lagrange's inversion formula --- see, e.g. \cite[\S7.32]{WW1927}.
We may  obtain an explicit expression for the $a_{kj}$ from the formula
\begin{equation}
\label{akj}
\sigma_n=\frac{\ag_k}{2\pi\ii\,n^{k/(k+1)}}\oint_{|z-1|=\varrho}\frac{zG'(z)}{G(z)}\d z+O\Big(\frac1{n^{(k+J)/(k+1)}}\Big)
\end{equation}
where $\varrho$ is a fixed, small positive constant and $$G(z):=z^{-1-1/k}-1-\frac{1}{2\ag_k zn^{1/(k+1)}}-\frac{\zeta(-k)}{2n}\cdot$$ This is classically derived from Rouché's theorem and we omit the details. The values of $a_{k1}$ and $a_{k2}$ may be retrieved from the above or by formally inserting \eqref{eq:lem2.2_A} into \eqref{appsign}. 
\par 
Inserting \eqref{akj} back into \eqref{eq:lem2.1_0} and \eqref{eq:lem2.1_1} immediately yields \eqref{eq:lem2.2_B} and \eqref{eq:lem2.2_C}.
\end{proof} 
With the aim of applying Laplace's method to evaluate the integral on the right-hand side of \eqref{pkn}, we need to show that it is dominated by a small neighbourhood of the saddle-point~$\sigma_n$. The next result meets this requirement. Here and in the sequel, all constants $c_j$ $(j\geqslant 0)$ are assumed, unless otherwise stated, to depend at most upon $k$.

\begin{lemma}\label{lem2.4}
We have
\begin{equation}
\label{eq:lem2.4}
\frac{|F_k(\sigma_n+\ii\tau)|}{|F_k(\sigma_n)|}
\leqslant \begin{cases}
\e^{-c_1\tau^2\sigma_n^{-(2+1/k)}} & \text{if $|\tau|\leqslant 2\pi\sigma_n$,}
\\\noalign{\vskip 1mm}
\e^{-c_2\sigma_n^{-1/k}} & \text{if $2\pi\sigma_n<|\tau|\leqslant \pi$.}
\end{cases}
\end{equation}
\end{lemma}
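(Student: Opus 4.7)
The plan is to start from the identity
\[
\log\frac{F_k(\sigma_n)}{|F_k(\sigma_n+\ii\tau)|}=\re\bigl(\Phi_k(\sigma_n)-\Phi_k(\sigma_n+\ii\tau)\bigr)=\sum_{r\geqslant 1}\frac{w_k(r)}{r}\e^{-r\sigma_n}\bigl(1-\cos(r\tau)\bigr),
\]
obtained from \eqref{Series:Phi_k(s)} by taking real parts, and then to bound the right-hand side from below in each of the two regimes. For $|\tau|\leqslant 2\pi\sigma_n$ I would apply the elementary inequality $1-\cos x\geqslant (2/\pi^2)x^2$ valid for $|x|\leqslant\pi$. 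Since $r|\tau|\leqslant\pi$ as soon as $r\leqslant 1/(2\sigma_n)$, restricting to this portion of the sum yields
\[
\log\frac{F_k(\sigma_n)}{|F_k(\sigma_n+\ii\tau)|}\geqslant\frac{2\tau^2}{\pi^2}\sum_{r\leqslant 1/(2\sigma_n)}w_k(r)\,r\,\e^{-r\sigma_n}\geqslant c_1\tau^2\sigma_n^{-2-1/k},
\]
using $\e^{-r\sigma_n}\geqslant\e^{-1/2}$ together with $\sum_{r\leqslant R}w_k(r)\,r\asymp R^{2+1/k}$, a consequence either of the Dirichlet identity $\sum w_k(r)r^{-s}=\zeta(s)\zeta(k(s-1))$ (whose rightmost pole sits at $s=1+1/k$) or of Abel summation starting from Lemma \ref{lem2.1} with $m=2$.

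For the second regime $2\pi\sigma_n<|\tau|\leqslant\pi$, the plan is to exploit the Mellin integral \eqref{Integral:Phi_k(s)}, which defines $\Phi_k(s)$ throughout $\re s>0$. Shifting the contour exactly as in the proof of Lemma \ref{lem2.1} but for the complex argument $s=\sigma_n+\ii\tau$ gives
\[
\re\Phi_k(\sigma_n+\ii\tau)=k\ag_k^{1+1/k}\,\sigma_n^{-1/k}\,q_k(|\tau|/\sigma_n)+O(1+|\log\sigma_n|),
\]
where $q_k(t):=(1+t^2)^{-1/(2k)}\cos\bigl(k^{-1}\arctan t\bigr)$ reads off the real part of $s^{-1/k}$ once $s$ is written as $|s|\e^{\ii\arg s}$. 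A short calculus check shows that $q_k$ is strictly decreasing on $[0,\infty)$, so its maximum over $|t|\geqslant 2\pi$ is attained at $|t|=2\pi$, with value $q_k(2\pi)=\cos(k^{-1}\arctan(2\pi))/(1+4\pi^2)^{1/(2k)}$ strictly less than $1$, each of the two factors being $<1$ because $\arctan(2\pi)\in(0,\pi/2)$. Combining with $\Phi_k(\sigma_n)\sim k\ag_k^{1+1/k}\sigma_n^{-1/k}$ from Lemma \ref{lem2.1} delivers
\[
\log\frac{F_k(\sigma_n)}{|F_k(\sigma_n+\ii\tau)|}\geqslant k\ag_k^{1+1/k}\bigl(1-q_k(2\pi)\bigr)\sigma_n^{-1/k}+O(1+|\log\sigma_n|)\geqslant c_2\sigma_n^{-1/k}
\]
as soon as $\sigma_n$ is small enough, i.e.\ for $n$ large (the bound for bounded $n$ being trivial up to adjustment of $c_2$).

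The hardest point is to make the $O$-term in this Mellin expansion uniform across the whole second range, particularly when $\arg(\sigma_n+\ii\tau)$ approaches $\pi/2$, i.e.\ when $|\tau|$ is close to $\pi$ while $\sigma_n$ is small. A naive bound on the shifted integral along $\re z=-1/2$ degrades as $\alpha:=\pi/2-\arg s\to 0$ because the factor $|s^{-z}|=|s|^{-\re z}\exp(\im z\cdot\arg s)$ is only partially damped by the $\Gamma$-decay $\e^{-\pi|\im z|/2}$. I would circumvent this by peeling off the sub-range where $|\tau|$ is bounded below by a fixed positive constant: on that sub-range $|\sigma_n+\ii\tau|$ is bounded both above and below independently of $\sigma_n$, so $\re\Phi_k(\sigma_n+\ii\tau)=O(1)$ while $\Phi_k(\sigma_n)\asymp\sigma_n^{-1/k}$, and the desired gap is automatic. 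At the specific value $\tau=\pi$ this can even be checked by hand: \eqref{Exp-Product:Fk(s)} reduces to $\prod_{m\text{ odd}}(1+\e^{-m^k\sigma_n})^{-1}\prod_{m\text{ even}}(1-\e^{-m^k\sigma_n})^{-1}$, producing an explicit exponential loss of order $\sigma_n^{-1/k}$ against $F_k(\sigma_n)$.
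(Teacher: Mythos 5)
Your first range is fine and is essentially the paper's argument in a different dress: the paper bounds the Euler product over the dyadic block $I=\bigl](4\sigma_n)^{-1/k},(2\sigma_n)^{-1/k}\bigr]$ via $\|m^k\tau/(2\pi)\|^2$, you bound the Lambert-type series via $1-\cos x\geqslant 2x^2/\pi^2$; both give $c_1\tau^2\sigma_n^{-(2+1/k)}$ (your mean value $\sum_{r\leqslant R}w_k(r)r\asymp R^{2+1/k}$ is correct).

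The second range, however, contains a genuine gap, and in fact both key claims there are false. The proposed uniform expansion $\re\Phi_k(\sigma_n+\ii\tau)=k\ag_k^{1+1/k}\sigma_n^{-1/k}q_k(|\tau|/\sigma_n)+O(1+|\log\sigma_n|)$ cannot hold on all of $2\pi\sigma_n<|\tau|\leqslant\pi$: for $k=1$, $\tau=\pi$, the factorisation you mention at the end gives $\re\Phi_1(\sigma_n+\ii\pi)=\Phi_1(2\sigma_n)-\sum_{m\ \mathrm{odd}}\log(1+\e^{-m\sigma_n})\sim\pi^2/(24\sigma_n)$, whereas your main term $\ag_1^{2}\,\re\{(\sigma_n+\ii\pi)^{-1}\}=O(\sigma_n)$. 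The same phenomenon occurs near every rational point: when $\tau$ is close to $2\pi a/q$, the factors with $q\mid m^k$ produce a spike of size $\asymp_q\sigma_n^{-1/k}$ in $\re\Phi_k(\sigma_n+\ii\tau)$ (e.g.\ $\re\Phi_1(\sigma_n+2\pi\ii/3)\sim\pi^2/(54\sigma_n)$), which is the major-arc structure of the circle method and is invisible to the Mellin residues; correspondingly your fallback claim that $\re\Phi_k(\sigma_n+\ii\tau)=O(1)$ once $|\tau|$ is bounded below is false, so the peeled-off sub-range is not ``automatic''. Note also that the loss of uniformity in the shifted integral sets in as soon as $|\tau|/\sigma_n\to\infty$, not only when $|\tau|$ is near $\pi$, so restricting to $|\tau|\geqslant\delta$ would not even cover $\sigma_n\ll|\tau|\leqslant\delta$. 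What is true, and what must actually be proved, is that each rational spike is smaller than $\Phi_k(\sigma_n)\sim k\ag_k^{1+1/k}\sigma_n^{-1/k}$ by a definite factor and that away from rationals with small denominator the real part is smaller still; this requires an arithmetic (equidistribution) input on $m^k\tau/(2\pi)$ modulo $1$. That is exactly where the paper's proof does its real work: it reduces to $S(\tau;\sigma_n)=\sum_{m\in I}\|m^k\tau/(2\pi)\|^2$, treats $2\pi\sigma_n<|\tau|\leqslant2\pi\sigma_n^{1-1/3k}$ by counting the $m\in I$ with $\|m^k\tau/(2\pi)\|\geqslant\tfrac14$, and treats $2\pi\sigma_n^{1-1/3k}<|\tau|\leqslant\pi$ via Dirichlet approximation combined with major-arc estimates and Weyl's inequality for $\sum_{m\in I}\e^{\ii\tau m^k}$. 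Some substitute for that step is indispensable (certainly for $k\geqslant2$); the monotonicity of $q_k$ cannot replace it.
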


\begin{proof}
 Noticing that
\begin{align*}
\big|1-\e^{-m^k(\sigma_n+\ii\tau)}\big|^2
& = \big|1-\e^{-m^k\sigma_n}\big|^2+4\e^{-m^k\sigma_n}\sin^2(\tfrac{1}{2}m^k\tau)
\\
& \geqslant \big|1-\e^{-m^k\sigma_n}\big|^2+16\e^{-m^k\sigma_n} \|m^k\tau/(2\pi)\|^2,
\end{align*}
we can write
\begin{align*}
\frac{|F_k(\sigma_n+\ii\tau)|^2}{|F_k(\sigma_n)|^2}
& \leqslant \prod_{m\geqslant 1} 
\bigg(1+\frac{16 \|m^k\tau/(2\pi)\|^2}{\e^{m^k\sigma_n}(1-\e^{-m^k\sigma_n})^2}\bigg)^{-1}
\\
& \leqslant \prod_{(4\sigma_n)^{-1/k}<m\leqslant (2\sigma_n)^{-1/k}} 
\bigg(1+\frac{16\|m^k\tau/(2\pi)\|^2}{\e^{m^k\sigma_n} (1-\e^{-m^k\sigma_n})^2}\bigg)^{-1}.
\end{align*}
Thus, there is an absolute positive constant $c_3$ such that
\begin{equation}\label{Inequality}
\frac{|F_k(\sigma_n+\ii\tau)|}{|F_k(\sigma_n)|}
\leqslant \e^{-c_3S(\tau; \sigma_n)}
\end{equation}
with
$$
S(\tau; \sigma_n)
:= \sum_{m\in I} \|m^k\tau/(2\pi)\|^2.
$$
where we have put $I:=\big](4\sigma_n)^{-1/k},(2\sigma_n)^{-1/k}\big]$.\par 
If $|\tau|\leqslant 2\pi\sigma_n$ and $m\in I$, we have $|m^k\tau/(2\pi)|\leqslant \frac{1}{2}$. Thus
\begin{equation}\label{S:UB_1}
S(\tau; \sigma_n)
= \sum_{(4\sigma_n)^{-1/k}<m\leqslant (2\sigma_n)^{-1/k}} m^{2k}\tau^2/4\pi^2
\asymp \tau^2 \sigma_n^{-(2+1/k)}.
\end{equation}
When $2\pi\sigma_n<|\tau|\leqslant 2\pi\sigma_n^{1-1/3k}$, we proceed similarly, noting that for any integer $h$ with $|\tau|/4\sigma_n< 2\pi h\leqslant |\tau|/2\sigma_n$ there are $\gg(1/\sigma_n)^{1/k}/h$ integers $m$ in $I$ such that $$\tfrac14\leqslant |m^k\tau/(2\pi)-h|\leqslant \tfrac12.$$ This yields the the required estimate 
\begin{equation}
\label{S:UB_2}
S(\tau; \sigma_n)
\gg \sigma_n^{-1/k}.
\end{equation}
\par 
When $2\pi(\sigma_n)^{1-1/3k}<|\tau|\leqslant \pi$, we  note that \eqref{S:UB_2} follows, via the Cauchy-Schwarz inequality, from the inequalities
$$\sum_{m\in I}\|\tau m^k/(2\pi)\|\geqslant \sum_{m\in I}|1-\e^{ \ii\tau m^k}|\geqslant |I|-\Big|\sum_{m\in I}\e^{ \ii\tau m^k}\Big|$$
provided we can show the modulus of the last exponential sum is, say, $\leqslant (1-c)|I|$ for some positive constant $c$ depending at most upon $k$.
Now Dirichlet's approximation lemma guarantees that there exist integers 
$a\in \Z^*$ and $q$, with $1< q\leqslant (1/\sigma_n)^{1-1/3k}$, $(a,q)=1$,
$$
|\tau/(2\pi)-a/q|\leqslant (\sigma_n)^{1-1/3k}/q.
$$
If $q\leqslant (1/\sigma_n)^{1/3k}$, we readily deduce the required estimate from \cite[Lemma 2.7 \& Theorem~4.2]{Va97}. If $ (1/\sigma_n)^{1/3k}<q\leqslant (1/\sigma_n)^{1-1/3k}$, we may apply Weyl's inequality, as stated for instance in \cite[Lemma 2.4]{Va97}, to get that the exponential sum under consideration is $\ll|I|^{1-\varepsilon_k}$ for some positive $\varepsilon_k$ depending only on $k$.
Hence,  \eqref{S:UB_2} holds in all circumstances.
\end{proof}

\section{Completion of the proof}

\begin{proposition}\label{prop3.1}
Let $k\geqslant 1$, $J\geqslant 1$. Then there is a real sequence $\{e_{kj}\}_{j\geqslant 1}$ such that for any integer $J\geqslant 1$ we have
\begin{equation}\label{eq:prop3.1}
p_k(n) = \frac{\exp(n\sigma_n+\Phi_k(\sigma_n))}{\sqrt{2\pi \Phi_k''(\sigma_n)}} 
\bigg\{1 + \sum_{2\leqslant j<J} \frac{e_{kj}}{n^{j/(k+1)}} + O\bigg(\frac{1}{n^{J/(k+1)}}\bigg)\bigg\}\qquad (n\to\infty).
\end{equation}
\end{proposition}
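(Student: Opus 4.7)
The plan is to apply the saddle-point method directly to the integral representation \eqref{pkn}. Writing $F_k(s)=\exp\Phi_k(s)$ and exploiting the defining property $-\Phi_k'(\sigma_n)=n$, I would factor out $\exp(n\sigma_n+\Phi_k(\sigma_n))$ to reduce the problem to
$$
p_k(n)\,\e^{-n\sigma_n-\Phi_k(\sigma_n)}
= \frac{1}{2\pi}\int_{-\pi}^{\pi} \e^{H(\tau)}\,\d\tau,
\quad
H(\tau):=-\tfrac12\tau^2\Phi_k''(\sigma_n)+\sum_{m\geqslant 3}\frac{(\ii\tau)^m}{m!}\Phi_k^{(m)}(\sigma_n),
$$
obtained by Taylor expanding $\Phi_k(\sigma_n+\ii\tau)-\Phi_k(\sigma_n)+\ii n\tau$ at $\tau=0$.

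Next, I would truncate to $|\tau|\leqslant\tau_0$ with $\tau_0:=\sigma_n^{1+1/(3k)}$. Lemma~\ref{lem2.4} bounds the complementary contribution by $\e^{-c_1\tau_0^2/\sigma_n^{2+1/k}}=\e^{-c_1\sigma_n^{-1/(3k)}}$, which is super-polynomially small since $\sigma_n^{-1/(3k)}\asymp n^{1/(3(k+1))}$. On the retained range I would substitute $v:=\tau\sqrt{\Phi_k''(\sigma_n)}$, so that $|v|\leqslant V:=\tau_0\sqrt{\Phi_k''(\sigma_n)}\asymp n^{1/(6(k+1))}\to\infty$ and $H(\tau)=-v^2/2+R(v)$ with $R(v):=\sum_{m\geqslant 3}\frac{(\ii v)^m}{m!}\lambda_m(n)$ and $\lambda_m(n):=\Phi_k^{(m)}(\sigma_n)\Phi_k''(\sigma_n)^{-m/2}$. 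By Lemma~\ref{lem2.2}(iii) each $\lambda_m(n)$ admits an asymptotic expansion in powers of $n^{-1/(k+1)}$ with leading order $\asymp n^{-(m-2)/(2(k+1))}$; consequently $R(v)$ is uniformly bounded on $|v|\leqslant V$ (the $m=3$ summand is $O(1)$, the higher ones are $o(1)$).

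I would then expand $\e^{R(v)}$ as a formal double power series in $v$ and in $\xi:=n^{-1/(2(k+1))}$, truncated at a degree (depending on $J$) large enough that the remainder only contributes $O(n^{-J/(k+1)})$; this is a direct Taylor estimate since $R(v)$ is bounded on the range. Integrating each monomial term by term against $(2\pi)^{-1/2}\e^{-v^2/2}$, extending the integration range to all of $\R$ (another super-polynomially small error as $V\to\infty$), and invoking the parity identity $\int_{\R} v^{2l+1}\e^{-v^2/2}\,\d v=0$, I would eliminate every monomial of odd degree in $v$. This forces odd powers of $\xi$ to cancel, so the series rearranges as one in $\xi^2=n^{-1/(k+1)}$ with real coefficients $e_{kj}$ depending only on $k$. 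The factor $1/\sqrt{2\pi\Phi_k''(\sigma_n)}$ produced by the Gaussian normalisation yields the overall prefactor of \eqref{eq:prop3.1}.

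The main technical hurdle will be the uniform bookkeeping of errors through this chain: truncating the double series for $\e^{R(v)}$ at a degree depending on $J$ while simultaneously controlling the polynomial growth of the $v$-factors against the Gaussian weight on the expanding range $|v|\leqslant V$, and verifying that the resulting coefficients $e_{kj}$ are indeed constants depending only on $k$ and $j$.
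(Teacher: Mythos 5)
Your argument is essentially the paper's own proof: the same cut of the integral at $|\tau|=\sigma_n^{1+1/(3k)}$ via Lemma~\ref{lem2.4}, Taylor expansion at the saddle point, exponentiation of the cubic-and-higher terms, Gaussian moment integrals with odd powers vanishing, and re-expansion through Lemma~\ref{lem2.2}(iii); your normalisation $v=\tau\sqrt{\Phi_k''(\sigma_n)}$ and your use of the full Taylor series controlled by boundedness of $R(v)$ (instead of the paper's truncation at order $2J+1$ with an explicit remainder, which sidesteps the uniformity-in-$m$ issue you flag) are only cosmetic variants. One caveat: what your computation produces --- exactly as the paper's does --- is an expansion whose sum starts at $j=1$, with $e_{k1}=-(2k^2+5k+2)/(24k\cg_k)\neq 0$ for $k\geqslant 2$, in agreement with the Remark following the proposition and with the stated value of $\gamma_{k1}$, so the lower limit $2$ in \eqref{eq:prop3.1} should be read as $1$ and you should not attempt to show that the $n^{-1/(k+1)}$ term vanishes.
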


\begin{proof}
By \eqref{pkn}, we have
\begin{equation}\label{Beginning:prop3.1}
p_k(n)= \frac{\e^{n\sigma_n}}{2\pi} \int_{-\pi}^{\pi} \e^{\Phi_k(\sigma_n+\ii\tau)+\ii n\tau} \d \tau.
\end{equation}
From \eqref{eq:lem2.4}, we deduce that
\begin{equation}
\label{majhorscol}
\begin{aligned}
\int_{2\pi\sigma_n<|\tau|\leqslant \pi} \e^{\Phi_k(\sigma_n+\ii\tau)+\ii n\tau} \d \tau&\ll\e^{\Phi_k(\sigma_n)-c_4\sigma_n^{-1/k}}\\
\int_{\sigma_n^{1+1/3k}<|\tau|\leqslant 2\pi\sigma_n} \e^{\Phi_k(\sigma_n+\ii\tau)+\ii n\tau} \d \tau&\ll\e^{\Phi_k(\sigma_n)-c_4\sigma_n^{-1/3k}}.\\
\end{aligned}
\end{equation}
Since these bounds are exponentially small with respect to the expected main term, it only remains to estimate the contribution of the interval $\Ig:=]- \sigma_n^{1+1/(3k)}, \sigma_n^{1+1/(3k)}[$, corresponding to a small neighbourhood of the saddle-point.
\par 
In this range, we have
$$
\Phi_k(\sigma_n+\ii \tau)
= \sum_{0\leqslant m\leqslant 2J+1} \frac{\Phi_k^{(m)}(\sigma_n)}{m!} (\text{i}\tau)^m + O\bigg(\frac{\tau^{2J+2}}{\sigma_n^{1/k+2J+2}}\bigg),
$$
where the estimate for the error term follows from \eqref{eq:lem2.1_1}. The same formula ensures that
$|\Phi_k^{(m)}(\sigma_n)\tau^m|\ll 1$ for $m\geqslant 3$.
Thus for $\tau\in \Ig$, we can write
\begin{align*}
&\e^{\Phi_k(\sigma_n+\ii \tau)+\ii n\tau}\\
&\quad = \e^{\Phi_k(\sigma_n) - \frac{1}{2}\Phi_k''(\sigma_n)\tau^2}
\bigg\{ 1 + \sum_{1\leqslant \ell\leqslant 2J} \frac{1}{\ell !} 
\bigg(\sum_{3\leqslant m\leqslant 2J+1} \frac{\Phi_k^{(m)}(\sigma_n)}{m!} (\ii\tau)^m\bigg)^{\ell} 
+ O\bigg(\frac{\tau^{2J+2}}{\sigma_n^{1/k+2J+2}}\bigg)\bigg\}
\\
&\quad = \e^{\Phi_k(\sigma_n) - \frac{1}{2}\Phi_k''(\sigma_n)\tau^2}
\bigg\{1 + \sum_{1\leqslant \ell\leqslant 2J} \frac{1}{\ell !} \sum_{3\ell\leqslant m\leqslant (2J+1)\ell} \lambda_{k,\ell,m}(n) \tau^m 
+ O\bigg(\frac{\tau^{2J+2}}{\sigma_n^{1/k+2J+2}}\bigg)\bigg\},
\end{align*}
where
\begin{equation}\label{def:lambda}
\lambda_{k,\ell,m}(n) := \text{i}^{m} 
\sum_{\substack{3\leqslant m_1, \dots, m_{\ell}\leqslant 2J+1\\ m_1 + \cdots + m_{\ell} = m}} 
\prod_{1\leqslant r\leqslant\ell} \frac{\Phi_k^{(m_r)}(\sigma_n)}{ m_r!}\cdot
\end{equation}
Since the contributions from odd powers of $\tau$ vanish,
we get
\begin{equation}
\label{contrcol}
\int_{\Ig} \e^{\Phi_k(\sigma_n+\ii\tau)+\ii n\tau} \d \tau
= \e^{\Phi_k(\sigma_n)}
\bigg\{I_0 + \sum_{1\leqslant \ell\leqslant 2J} \frac{1}{\ell !} \sum_{3\ell\leqslant 2m\leqslant (2J+1)\ell} \lambda_{k,\ell,2m}(n) I_m + O(R)\bigg\},
\end{equation}
with
$$
I_m := \int_{\Ig} \e^{- \frac{1}{2}\Phi_k''(\sigma_n)\tau^2} \tau^{2m} \d \tau,
\qquad 
R:= \sigma_n^{-1/k-2J-2}
\int_{\Ig}\e^{- \frac{1}{2}\Phi_k''(\sigma_n)\tau^2} \tau^{2J+2} \d \tau.
$$
 Extending the range of integration in $I_m$ involves an exponentially small error, so we get from the classical formula for Laplace integrals
$$I_m=\frac{\sqrt{2\pi}(2m)!}{m!2^m\Phi_k''(\sigma_n)^{m+1/2}}+O\Big(\e^{-c_5n^{1/(3k+3)}}\Big),\quad R\asymp \sigma_n^{1+(J+1/2)/k}\asymp\frac1{\sqrt{\Phi_k''(\sigma_n)}n^{J/(k+1)}}\cdot$$
Inserting these estimates back into \eqref{contrcol} and expanding all arising factors $\Phi_k^{(m)}(\sigma_n)$ by~\eqref{eq:lem2.2_C}, we obtain~\eqref{eq:prop3.1}.
\end{proof}
\noindent{\it Remark.}
From \eqref{majhorscol} and \eqref{contrcol} we see that, when $k\geqslant 2$,
\begin{equation}
p_k(n)=\frac{\e^{n\sigma_n+\Phi_k(\sigma_n)}}{\sqrt{2\pi\Phi_k''(\sigma_n)}}\Big\{1-\frac{2k^2+5k+2}{24k\cg_k}\Big(\frac{\sigma_n}{\ag_k}\Big)^{1/k}+O\Big(\sigma_n^{2/k}\Big)\Big\}
\end{equation}
where, in view of \eqref{eq:lem2.1_1}, the quantity inside curly brackets may be replaced by an asymptotic series in powers of $\sigma_n^{1/k}$. Inserting \eqref{eq:lem2.1_0} and \eqref{eq:lem2.2_A} in the main term, we thus get a formula which is very close to, but simpler than  \eqref{Gafni:Asymp}, since it follows from 
\eqref{appsign} that $X$ and $1/\sigma_n$ agree to any power of $\sigma_n$.
\par 
\medskip
We are now in a position to complete the proof of Theorem \ref{thm1}.  
\par 
We infer from \eqref{eq:lem2.2_A} and \eqref{eq:lem2.2_B} that
$$
n\sigma_n+\Phi_k(\sigma_n)
= \mathfrak{c}_k n^{1/(k+1)} 
+ \sum_{1\leqslant j<J} \frac{a_{kj}^*}{n^{j/(k+1)}}
+ O\Big(\frac{1}{n^{J/(k+1)}}\Big)
+ \tfrac12\log\bigg(\frac{\sigma_n}{(2\pi)^k}\bigg)
$$
with $a_{kj}^* := \ag_k (k a_{k,j+1}+b_{k,j+1})$.
Exponentiating and expanding, we get
\begin{equation}\label{Proof:Thm1_A}
\begin{aligned}
 \exp&(n\sigma_n+\Phi(\sigma_n))
\\
& = \frac{\sqrt{\sigma_n}}{(2\pi)^{k/2}}\exp\big(\mathfrak{c}_k n^{1/(k+1)}\big)
\bigg\{1 
+ \sum_{1\leqslant \ell<J} \frac{1}{\ell !} \bigg(\sum_{1\leqslant j<J} \frac{a_{kj}^*}{n^{j/(k+1)}}\bigg)^{\ell}
+ O\Big(\frac{1}{n^{J/(k+1)}}\Big)\bigg\}
\\
& = \frac{\sqrt{\sigma_n}}{(2\pi)^{k/2}}\exp\big(\mathfrak{c}_k n^{1/(k+1)}\big)
\bigg\{1 + \sum_{1\leqslant j<J} \frac{f_{kj}}{n^{j/(k+1)}} + O\Big(\frac{1}{n^{J/(k+1)}}\Big)\bigg\}
\end{aligned}
\end{equation}
with
$$
f_{kj} := \sum_{1\leqslant \ell<J} \frac{1}{\ell !} 
\sum_{\substack{1\leqslant j_1, \dots, j_{\ell}<J\\ j_1+\cdots+j_{\ell}=j}} a_{kj_1}^* \cdots a_{kj_{\ell}}^*.
$$
\par 
It remains to insert back into \eqref{eq:prop3.1} and expand $\sqrt{\sigma_n/\Phi_k''(\sigma_n)}$ according to  \eqref{eq:lem2.2_A} and \eqref{eq:lem2.2_C} with $m=2$ to obtain the required asymptotic formula.

\bigskip\bigskip\bigskip


\begin{thebibliography}{100}
\bibitem{BMZ18} B.C. Berndt, A. Malik, and A. Zaharescu, Partitions into kth-powers of
terms in an arithmetic progression, {\it Math. Z. \bf 290} (2018), 1277--1307.
\bibitem{DR18} A. Dunn and N. Robles, Polynomial partition asymptotics, {\it J.
Math. Anal. App. \bf459} (2018), 359--384.
\bibitem{Gafni2016}
A. Gafni, 
\textit{Power partitions},
J. Number Theory {\bf 163} (2016), 19--42.


\bibitem{HardyRamanujan1918}
G. Hardy and S. Ramanujan,
\textit{Asymptotic formulae in combinatory analysis},
Proc. London Math. Soc. (2) {\bf 17} (1918), 75--115.

\bibitem{Tenenbaum}
G. Tenenbaum,
\textit{Applications de la m\'ethode du col}, 
Cours M2 (2015/2016), M\'ethodes analytiques,
Institut Élie Cartan de Lorraine, Universit\'e de Lorraine.
\bibitem{Va97}
R. C. Vaughan,
\textit{The Hardy-Littlewood method}, Cambridge tracts in mathematics, no. 125, second edition, Cambridge Unversity Press, 1997.

\bibitem{Vaughan2015}
R. C. Vaughan,   
\textit{Squares: additive questions and partitions}, 
Int. J. Number Theory {\bf 11}, no. 5 (2015), 1367-1409.

\bibitem{WW1927} E.T. Whittaker \& G.N. Watson,
 {\it A course of modern analysis} (4-i\`eme \'ed.), Cambridge
University Press, 1927.

\bibitem{Wright1934}
E. M. Wright,   
\textit{Asymptotic partition formulae, III. Partitions into $k$-th powers}, 
Acta Math. {\bf 63} (1) (1934), 143--191.



\end{thebibliography}
\end{document}